\newcommand{\TC}[1]{\textcolor{black}{{#1}}}
\def\proj{\mathop{ {\bf \Pi}_{{\cal H}_{0,t}^{y^i}}}} 
\newtheorem{theorem}{Theorem}
\newtheorem{lemma}{Lemma}
\newtheorem{definition}{Definition}
\newtheorem{corollary}{Corollary}
\newtheorem{assumptions}{Assumptions}
\newtheorem{problem}{Problem}
\newcommand{\veps}{\varepsilon}
\newcommand{\la}{\langle}
\newcommand{\ra}{\rangle}
\newcommand{\sr}{\stackrel}
\newcommand{\rar}{\rightarrow}
\newcommand{\tri}{\sr{\triangle}{=}}
\newcommand{\be}{\begin{equation}}
\newcommand{\ee}{\end{equation}}
\newcommand{\bea}{\begin{eqnarray}}
\newcommand{\eea}{\end{eqnarray}}
\newcommand{\bes}{\begin{eqnarray*}}
\newcommand{\ees}{\end{eqnarray*}}
\newcommand{\bi}{\begin{itemize}}
\newcommand{\ei}{\end{itemize}}
\newcommand{\ben}{\begin{enumerate}}
\newcommand{\een}{\end{enumerate}}
\newcommand{\bp}{\begin{problem}}
\newcommand{\ep}{\end{problem}}
\newcommand{\hso}{\hspace{.1in}}
\newcommand{\hst}{\hspace{.2in}}
\newcommand{\noi}{\noindent}
\newcommand{\bc}{\begin{center}}
\newcommand{\ec}{\end{center}}
\begin{document}
%
%
%
%
\title{\bf Team and Person-by-Person Optimality Conditions of Differential Decision Systems}


\author{ Charalambos D. Charalambous\thanks{C.D. Charalambous and C.N. Hadjicostis are with the Department of Electrical and Computer Engineering, University of Cyprus, Nicosia 1678 (E-mails:{\tt \{chadcha,chadjic\}@ucy.ac.cy}).}, Themistoklis Charalambous\thanks{T. Charalambous is with  with the Automatic Control Lab, Electrical Engineering Department and ACCESS Linnaeus Center, Royal Institute of Technology (KTH), Stockholm, Sweden.  Corresponding author's address: Osquldas v\"{a}g 10, 100-44 Stockholm, Sweden (E-mail: {\tt themisc@kth.se}).} and Christoforos N. Hadjicostis
}

\maketitle
%
%
%
%
\begin{abstract}
In this paper,  we derive team and person-by-person optimality conditions for distributed differential decision  systems with different or decentralized information structures. The necessary conditions of optimality are given in terms of Hamiltonian system of equations  consisting of a coupled backward and forward differential equations and a Hamiltonian projected onto the subspace generated by the decentralized information structures. Under certain global convexity conditions it is shown that the optimality conitions are also sufficient.

\end{abstract}

%
%
%
%
\section{Introduction}\label{introduction}


When the system model consists of multiple decision makers,  and  the acquisition of information and its processing is decentralized or shared among several locations,  the decision makers  actions are based on different information.  We call  the information available for such  decisions,  \emph{``decentralized information structures or patterns''}.  When the system model is dynamic, consisting of an interconnection of at least two subsystems, and  the decisions are based on decentralized information structures, we call the overall system  a \emph{``distributed system with decentralized information structures''}.

 Over the years several specific forms of decentralized information structures are analyzed mostly in discrete-time (see, for example \cite{bamieh-voulgaris2005,nayyar-mahajan-teneketzis2011,vanschuppen2011,lessard-lall2011,mahajan-martins-rotkowitz-yuksel2012} for the most recent approaches). However, at this stage the systematic framework  addressing  optimality conditions for distributed systems with decentralized information structures is \cite{charalambous-ahmedFIS_Parti2012,charalambous-ahmedFIS_Partii2012}, where necessary and sufficient team game optimality conditions are given for distributed stochastic differential systems with decentralized information structures. 

In this paper, we draw the corresponding results for deterministic continuous- and discrete-time systems with decentralized information structures. More specifically, we consider a team game reward (e.g., \cite{marschak1955,radner1962,waal-vanschuppen2000})  and we apply concepts from  the classical theory of optimization to derive necessary and sufficient optimality conditions for  nonlinear distributed systems with decentralized information structures. The optimality conditions developed in this paper can be applied to  many  architectures of distributed systems (see, for example, Fig.~\ref{ADC}). 
\noi The specific contributions of this paper  are the following. \\
\noi \textbf{(a)} Derive team games necessary and sufficient conditions of optimality for distributed deterministic differential decision systems with decentralized information structures. \\
\noi \textbf{(b)} Derive person-by-person optimality conditions and discuss their relation with team optimality conditions;\\
\noi \textbf{(c)} Apply the optimality conditions to cetrain types of differential team games.

\begin{figure}[t]
\begin{center}
\includegraphics[width=\columnwidth]{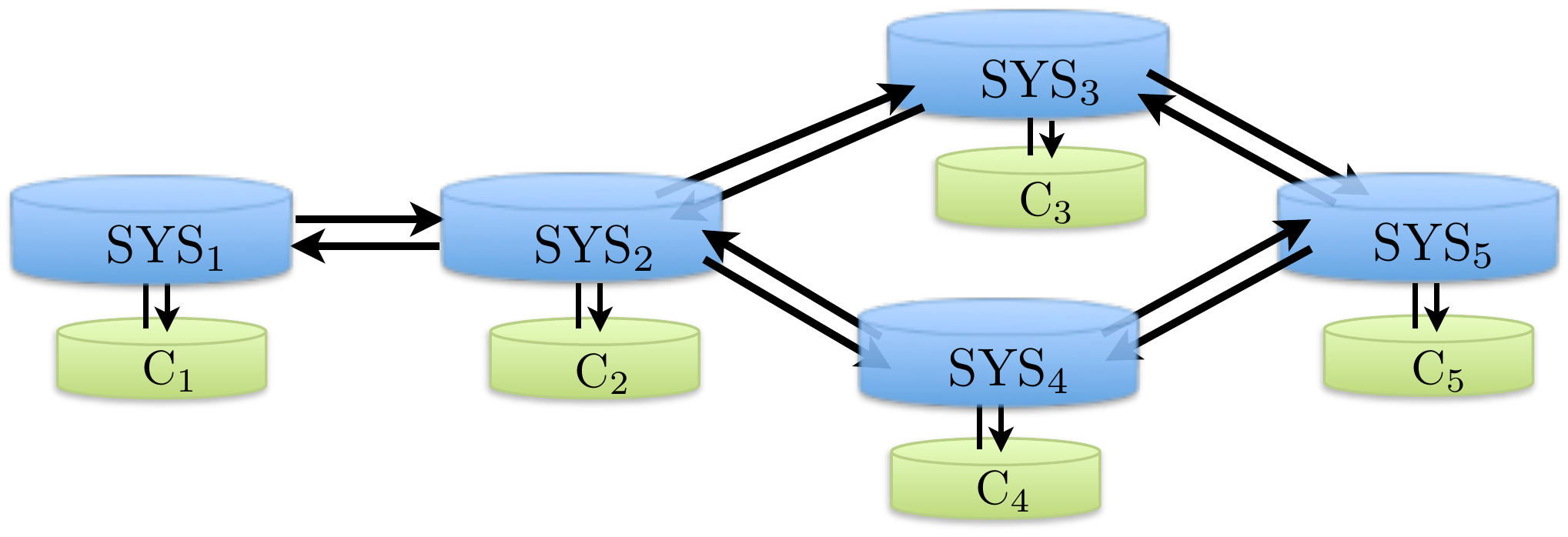}
\caption{Diagram of an example of the architecture for distributed decision systems.}
\label{ADC}
\end{center}
\end{figure}

In Section~\ref{sec:notation} the notation used throughout the paper is provided, along with some background on team games and information structures that is needed for our subsequent development. In Section~\ref{sec:deterministic}, we first introduce  the  formulation of the team and person-by-person decision problems of differential systems, and then we derive the optimality conditions. In Section~\ref{sec:examples}, we compute the optimal strategies for specific pay-off and differential structures and in Section~\ref{sec:discrete}, we provide the equivalent formulation for discrete-time dynamical systems.

%
%
%
%
\section{Notation and Preliminaries} \label{sec:notation}


The sets of real, integer and natural numbers are denoted by $\mathds{R}$, $\mathds{Z}$ and $\mathds{N}$, respectively; ${\mathbb Z}_N  \tri \{1,2,\ldots, N\} $ and ${\mathbb Z}^0_N  \tri \{0,1,2,\ldots, N\} $.  
The Borel algebra on $[0,T]$ is denoted by ${\cal B}([0,T])$ and the linear transformation mapping of a vector space ${\cal X}$ into a vector space ${\cal Y}$ is denoted by ${\cal L}({\cal X},{\cal Y})$.  $\la a, b\ra $ represents the inner product in ${\mathbb R}^n, \forall a, b \in {\mathbb R}^n$ for some positive integer $n$, whereas  $| a|_{{\mathbb R}^n} \tri \sqrt{\la a, b\ra} $ is the norm on ${\mathbb R}^n, \forall a\in {\mathbb R}^n$ for some positive integer $n$. ${\cal H} = M \bigoplus M^\perp$ is a direct sum representation of a Hilbert space ${\cal H}$, where $M$ is a closed subspace of ${\cal H}$ and $M^\perp$ its orthogonal complement.
 ${\bf \Pi}_M(x)$ is the orthogonal projection of a Hilbert space element $x\in {\cal H}$ onto the subspace $M \subset {\cal H}$. 

Our derivations will make use of the following spaces. 
$C([0,T], {\mathbb R}^n) \tri \Big\{\mbox{continuous functions} \: \phi: [0,T] \longrightarrow {\mathbb R}^n :   \sup_{t \in [0,T]} |\phi(t)|_{{\mathbb R}^n} < \infty \Big\}$;
$B^\infty([0,T], {\mathbb R}^n) \tri \Big\{ \mbox{measurbale functions} \: \phi: [0,T] \longrightarrow {\mathbb R}^n :  ||\phi||^2 \tri  \sup_{t \in [0,T]}  |\phi(t)|^2_{{\mathbb R}^n} < \infty \Big\}$.
\noi For Lebesgue measurable functions $ \phi$ we have the following spaces:
$ L^2([0,T], {\mathbb R}^n) \tri \Big\{\phi: [0,T] \longrightarrow {\mathbb R}^n: 
\int_{[0,T]} |z(t)|_{{\mathbb R}^n}^2 dt < \infty \Big\}$ ,
$ L^2([0,T],   {\cal L}({\mathbb R}^m,{\mathbb R}^n)) \tri \Big\{\phi: [0,T] \longrightarrow {\mathbb R}^{n \times m} : 
\int_{[0,T]} |\Sigma(t)|_{{\cal L}({\mathbb R}^m,{\mathbb R}^n)}^2 dt  \tri  \int_{[0,T]} tr(\Sigma^*(t)\Sigma(t)) dt < \infty  \Big\}$.

%
%
%
%
\section{Team Games of Differential Systems}\label{sec:deterministic}

We first introduce  the mathematical formulation of the team and person-by-person (PbP) decision problems of differential systems, and then we derive the optimality conditions. We invoke decision maker (DM) strategies which are  deterministic measurable  functions, also known as regular  strategies.

\subsection{Elements of Team Games}\label{elements}

The basic elements of a team game are the state space, the observation space,  the DMs action spaces, and the pay-off. These are described  below.\\

\noi{\bf Unobserved State Space}\\
The unobserved state space is assumed to be a  linear complete separable metric space $({\mathbb X}_{[0,T]},d)$, where  ${\cal B}({\cal X}_{[0,T]})$ are the  measurable subsets of the unobserved state space ${\mathbb X}_{[0,T]}$ generated by open sets (with respect to metric $d$). Elements $x \in {\mathbb X}_{[0,T]}$ are the unobserved state trajectories.  Since state trajectories are solutions of differential equations, an envisioned scenario is ${\mathbb X}_{[0,T]} =C(0,T], {\mathbb R}^n)$, where ${\cal B}({\cal X}_{[0,T}])={\cal B}(C([0,T], {\mathbb R}^n))$ is the $\sigma-$field generated by cylinder sets in $C([0,T], {\mathbb R}^n)$,   and a state trajectory is  $x \tri \{x(t): t \in [0,T]\} \in C([0,T], {\mathbb R}^n)$.   We also introduce $\sigma-$field generated by truncations of $x \in C([0,T], {\mathbb R}^n)$ defined by  
\begin{align}
{\cal B}_t(&C[0,T], {\mathbb R}^n))  \tri  \sigma \Big\{ \{ x \in C([0,T], {\mathbb R}^m): x(s) \in A \}: \nonumber \\
& 0\leq s \leq t, \hso A \in {\cal B}({\mathbb R}^n) \Big\}, \  t \in [0,T].
\end{align}
Thus, $\{ {\cal B}_t(C[0,T], {\mathbb R}^n)) : t \in [0,T]\}$ is a family of $\sigma-$fields which is nondecreasing, \\${\cal B}_s(C[0,T], {\mathbb R}^n)) \subseteq 
{\cal B}_t(C[0,T], {\mathbb R}^n)), 0 \leq s \leq t \leq T$. 
Thus, for continuous trajectories the space $C([0,T], {\mathbb R}^n)$ represent the unobserved state space, and its elements the unobserved state trajectories. \\

\noi{\bf Observation Space}\\
The observation space is assume to be a linear complete separable metric space $({\mathbb Y}_{[0,T]}^i, d^i)$, where ${\cal B}({\cal Y}_{[0,T]}^i)$ are its   measurable subsets generated by open sets, for $i=1, \ldots, N$. Thus, elements $y \in {\mathbb Y}_{[0,T]}^i$ represent the observable trajectories. For unobserved state space $C([0,T], {\mathbb R}^n)$, the observable trajectories are generated by the maps 
\begin{align}
h^i: [0,T] \times C([0,T], {\mathbb R}^n)  \longrightarrow& {\mathbb R^{k_i}}, \ y^i(t)\tri h^i(t,x), \nonumber \\ 
&\hst i=1, \ldots N, \label{obs1}
\end{align}
such that $\{h^i(t.x): (t,x) \in [0,T] \times C([0,T], {\mathbb R}^{k_i})  \}$ have the following property: for all $t \in [0,T]$, the map $(s,x) \rar h^i(s,x)$ is  $ {\cal B}([0,t]) \otimes   {\cal B}_t(C[0,T], {\mathbb R}^n))/{\cal B}({\mathbb R}^{k_i})-$measurable for $i=1, \ldots, N$. When this propery holds we say, $\{y^i(t): t \in [0,T]\}$ is progressively measurable with respect to the family $\{ {\cal B}_t(C[0,T], {\mathbb R}^{k_i})) : t \in [0,T]\}$. Often we shall assume observation trajectories which are square integrable $y^i \in L^2([0,T], {\mathbb R}^{k_i})$, and progressively measurable with respect to $\{ {\cal B}_t(C[0,T], {\mathbb R}^{k_i})) : t \in [0,T]\}$. Given an underlying Hilbert space ${\cal H}^i$  we denote by ${\cal H}_{0,t}^{y^i} \tri \overline{Span}\Big\{y^i(s): 0\leq s \leq t\Big\}$ the closed subspace generated by $\{y^i(s): 0 \leq s \leq t\}$ which is an element of the  Hilbert space (i.e.,  ${\cal H}_{0,t}^{y^i}  \subset {\cal H}^i$), $t \in [0,T],$ for $i=1, \ldots, N$. Note that the above constructions also embeds as a special case observation trajectories which are independent of $x$, by setting $y^i(t)=h^i(t), h^i: [0,T] \longrightarrow {\mathbb R}^{k_i}$, for $i=1, \ldots, N$. \\

\noi{\bf Team Members}\\
The team is assumed to consist of  $N$ Decision Makers (DM) or players whose actions $\{u_t^i: t \in [0,T]\}$, take values in a closed convex set ${\mathbb A}^i$ of  linear separable  metric space $({\mathbb M}^i,d^i), i=1, \ldots, N$. Unlike the centralized decision making, each DMs $i$ actions depends only  on his own observation space $({\mathbb Y}_{[0,T]}^i, d)$. Let  $\{{\cal B}_t({\cal Y}_{[0,T]}^i): t \in [0,T]\}$ denote the family of $\sigma-$fields generated by truncations of $y^i \in {\mathbb Y}_{[0,T]}^i,  i=1, \ldots N$.   The set of admissible laws or strategies of DM $i$, denoted by  $ {\mathbb U}_{reg}^i[0, T]$, is  defined by 
\footnote{We often write $ L_{{\cal Y}_T^i}^2([0,T],{\mathbb R}^{d_i}) \equiv  L^2([0,T],{\mathbb R}^{d_i})$ to indicate that its elements are   $\{{\cal B}_t({\cal Y}_{[0,T]}^i): t \in [0,T]\}-$progressively measurable.}  
 \begin{align}
 {\mathbb U}&_{reg}^i[0, T] \tri \Big\{  u^i   \in  L^2([0,T],{\mathbb R}^{d_i})  : \:   u_t^i \in {\mathbb A}^i \subset {\mathbb R}^{d_i}, \: \nonumber \\ 
 & t \in [0,T],  \: u^i \: {is} \:\{{\cal B}_t({\cal Y}_{[0,T]}^i): t \in [0,T]\} \nonumber \\
 &  \hst-\mbox{progressively measurable} \Big\}, \hso  \forall  i \in {\mathbb Z}_N. \label{cs1a}
 \end{align}
Clearly,   ${\mathbb U}_{reg}^i[0, T]$ is a  closed convex subset of  $L_{{\cal Y}_T^i}^2([0,T],{\mathbb R}^{d_i})$, for $i=1,2, \ldots, N$.  
 The set of admissible $N$ team or person-by-person strategies    is denoted by ${\mathbb U}_{reg}^{(N)}[0,T] \tri \times_{i=1}^N {\mathbb U}_{reg}^i[0,T]$. 
  
\noi  The DM actions $\{u_t^i: t \in [0,T]\}$ are called: \\
\noi {\bf Open Loop (OL),} if  $u_t^i=\mu^i(t)$, for $t \in [0,T]$, where $\mu^i: [0,T] \longrightarrow {\mathbb A}^i$ are deterministic measurable functions, $i=1, \ldots, N$; \\
\noi {\bf Closed Loop Feedback (CLF),}  if  $u_t^i=\mu^i(t,y^i)$ are nonanticipative functionals of the observation trajectory $y^i(\cdot)$, for $t \in [0,T]$, where $\mu^i: [0,T] \times {\mathbb Y}_{i[0,T]}^i \longrightarrow {\mathbb A}^i$, are deterministic measurable mappings, $i=1, \ldots, N$; \\
\noi {\bf Closed Loop Markov (CLM),}  if  $u_t^i=\mu^i(t,y^i(t))$, for $t \in [0,T]$, where $\mu^i: [0,T] \times {\mathbb R}^{k_i}\longrightarrow {\mathbb A}^i$, are deterministic measurable functions, $i=1, \ldots, N$.

Clearly, open loop strategies can be described via observations $\{y^i(t): t \in [0,T]\}$ which belong to closed subspaces generated by finite number of basis, ${\cal H}_{0,t}^{y^i} \tri Span\Big\{e_1^1, e_2^i, \ldots, e_{j_i}^i\Big\}$ of a Hilbert space ${\cal H}_{0,t}^i, t\in [0,T]$, for $i=1, \ldots, N$. \\

 \noi {\bf Distributed Differential  System}\\
 A distributed differential system consists of an interconnection of $N$ subsystems.  Each subsystem $i$ has its own   state vector ${\mathbb R}^{n_i}$, action space ${\mathbb A}^i \subset {\mathbb R}^{d_i}$, and an  initial state vector $x^i(0)=x_0^i$, described by a  system of coupled  differential equations  as follows.
 \begin{align}
 \dot{x}^i(t) &=f^i(t,x^i(t),u_t^i)  + \sum_{j=1, j \neq i}^N f^{ij}(t,x^j(t),u_t^j) \;, \nonumber \\ 
& \hst x^i(0) = x_0^i, \hso  t \in (0,T], \hso i \in {\mathbb Z}_N. \label{eq1ds}
 \end{align}
Define the augmented vectors by
\bes
  u \tri (u^1, u^2, \ldots, u^{N}) \in {\mathbb R}^d, \hso x \tri (x^1, x^2, \ldots, x^{N}) \in {\mathbb R}^n.
 \ees
In compact form the distributed  differential  system  is described  by
 \begin{eqnarray}
 \dot{x}(t) =  f(t,x(t),u_t), \hst x(0) = x_0, \hst  t \in (0,T], \label{eq1}
 \end{eqnarray}
 where $f: [0,T] \times {\mathbb R}^n\times {\mathbb A}^{(N)} \longrightarrow {\mathbb R}^n$.  Note that \eqref{eq1} is very general since no specific interconnection structure is assumed among the different subsystems. \\

   \noi {\bf Pay-off Functional}\\
Consider the distributed system (\ref{eq1}) with a given admissible set of DMs strategies.  
  \noi Given a $u \in {\mathbb U}_{reg}^{(N)}[0,T],$ we  define the reward or performance criterion by
\begin{align}
J(u^1,\ldots,u^N) \tri \int_{0}^{T}  \ell(t,x(t),u_t) dt + \varphi(x(T),            \label{cfd}
 \end{align}
  where $\ell: [0,T] \times {\mathbb R}^n\times {\mathbb U}^{(N)} \longrightarrow (-\infty, \infty]$   and $\varphi : {\mathbb R}^n \longrightarrow (-\infty, \infty]$.
 Notice that the performance of the decentralized system is measured  by  a single pay-off functional. The underlying assumption concerning the single pay-off instead of  multiple pay-offs  (one for each decision maker) is that the team objective   can be met.\\

\noi {\bf Team and Person-by-Person Optimality}\\
Given the basic elements of the team game introduce above, we now introduce the   definitions of team and Person-by-Person (PbP) or (player-by-player)  optimality.  \\

 \begin{problem}(Team and Person-by-Person Optimality)
 \label{problem1}
\noi {\bf (T): Team Optimality.} Given the  pay-off functional (\ref{cfd}),   constraint (\ref{eq1})   the  $N$ tuple of  strategies   $u^o \tri (u^{1,o}, u^{2,o}, \ldots, u^{N,o}) \in {\mathbb U}_{reg}^{(N)}[0,T]$  is called  team optimal if it satisfies
 \bea
 J(u^{1,o}, u^{2,o}, \ldots, u^{N,o}) \leq J(u^1, u^2, \ldots, u^N),  \label{cfd1ad}
 \eea
 for all $u\tri (u^1, u^2, \ldots, u^N) \in {\mathbb U}_{reg}^{(N)}[0,T]$. Any $u^o   \in {\mathbb U}_{rel}^{(N)}[0,T]$ satisfying (\ref{cfd1ad}) is called an optimal  regular decision strategy (or control) and the corresponding $x^o(\cdot)\equiv x(\cdot; u^o(\cdot))$ (satisfying (\ref{eq1})) the  optimal state process. \\
\noi {\bf (PbP): Person-by-Person Optimality.} Given the   pay-off functional (\ref{cfd}),   constraint (\ref{eq1})   the  $N$ tuple of  strategies   $u^o \tri (u^{1,o}, u^{2,o}, \ldots, u^{N,o}) \in {\mathbb U}_{reg}^{(N)}[0,T]$  is called  person-by-person optimal  if it satisfies
\begin{align}
 \tilde{J}(u^{i,o}, u^{-i,o}) = J(u^o) \leq \tilde{J}(u^{i}, u^{-i,o}),  \label{cfd2d}
 \end{align}
for all $u^i \in {\mathbb  U}_{reg}^i[0,T], \hso \forall i \in {\mathbb Z}_N$, where 
 \bes
 \tilde{J}(v,u^{-i}) \tri J(u^1,u^2,\ldots, u^{i-1},v,u^{i+1},\ldots,u^N).
 \ees
\end{problem}

 Conditions (\ref{cfd2d}) are analogous to the Nash equilibrium strategies of team games consisting of a single pay-off and $N$ DM.   The rationale for the restriction to PbP optimal strategy is based on the fact that the actions of the $N$ DM are not communicated to each other, and hence they cannot do better than restricting attention to this optimal strategy.

\subsection{Existence of Solutions and Continuous Dependence}
\label{ecd}
Herein,  we study the question of existence of solutions to \eqref{eq1}  and its continuous dependence on the DM strategies based on the following assumptions.

\begin{assumptions}
The drift $f$ associated with \eqref{eq1}  is a   Borel measurable  map defined by
\begin{eqnarray*}
 f: [0,T] \times {\mathbb R}^n \times {\mathbb A}^{(N)} \longrightarrow {\mathbb R}^n,
 \end{eqnarray*} 
and there exists a $K \in L^{2,+}([0,T], {\mathbb R})$ such that \\
\textbf{\emph{(A1)}} $|f(t,x,u)-f(t,y,u)|_{{\mathbb R}^n} \leq K(t) |x-y|_{{\mathbb R}^n}$ uniformly in $u \in {\mathbb A}^{(N)}$; \\
\textbf{\emph{(A2)}} $|f(t,x,u)-f(t,x,v)|_{{\mathbb R}^n} \leq K(t) |u-v|_{{\mathbb R}^d}$ uniformly in $x \in {\mathbb R}^n$; \\
\textbf{\emph{(A3)}} $|f(t,x,u)|_{{\mathbb R}^n} \leq K(t) (1 + |x|_{{\mathbb R}^n}+ |u|_{{\mathbb R}^d})$.
\label{A1-A4}
\end{assumptions}

 Assumptions~\ref{A1-A4}  are sufficient conditions for the existence of a unique $C([0,T], {\mathbb R}^n)$  solution which is also an element  of the space $B^{\infty}([0,T],{\mathbb R}^n)$.

\noi The following lemma establishes such results and  continuous dependence of solutions on the DM strategies.
\begin{lemma}
\label{lemma3.1}
Suppose Assumptions~\ref{A1-A4} hold. Then for any $u \in {\mathbb U}_{reg}^{(N)}[0,T]$,  the following hold. \\
 \noi {\bf 1)} System (\ref{eq1}) has a unique solution   $x \in B^{\infty}([0,T],{\mathbb R}^n)$  which is  continuous $x \in C([0,T],{\mathbb R}^n)$. \\
\noi {\bf 2)} The solution of  system  (\ref{eq1}) is continuously dependent on the DM strategies, in the sense that, as $u^{i, \alpha} \longrightarrow u^{i,o}$  in ${\mathbb U}_{reg}^i[0,T]$, $\forall i \in {\mathbb Z}_N$,  $x^\alpha \longrightarrow x^o $ in $B^{\infty}([0,T],{\mathbb R}^n)$.
\end{lemma}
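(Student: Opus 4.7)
The plan is to prove both parts by working with the equivalent integral equation
\[
x(t) = x_0 + \int_0^t f(s,x(s),u_s)\,ds, \quad t \in [0,T],
\]
and then combining standard Picard/Banach fixed-point techniques with Gronwall's inequality. Throughout, the key point is that $K\in L^{2,+}([0,T],\mathbb{R})\subset L^1([0,T],\mathbb{R})$ on the bounded interval $[0,T]$, so the Lipschitz/growth bounds supplied by \textbf{(A1)}--\textbf{(A3)} are all integrable in $t$.

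For part \textbf{1)}, I would first fix $u\in{\mathbb U}_{reg}^{(N)}[0,T]$ and show existence and uniqueness by a contraction argument on $C([0,\delta],{\mathbb R}^n)$ with $\delta>0$ chosen so that $\int_0^\delta K(s)\,ds<1$. The integrand $f(s,x(s),u_s)$ is measurable in $s$ (since $u$ is progressively measurable and $f$ is Borel) and, by \textbf{(A3)}, majorized by $K(s)(1+|x(s)|_{{\mathbb R}^n}+|u_s|_{{\mathbb R}^d})$, which is in $L^1([0,T])$ because $K,|u|\in L^2$; hence the map $\Phi(x)(t)\tri x_0+\int_0^t f(s,x(s),u_s)\,ds$ is well-defined and takes values in $C([0,\delta],{\mathbb R}^n)$. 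Using \textbf{(A1)}, $\Phi$ is a contraction on that interval, and the local solution is extended to $[0,T]$ by iterating the argument on $[\delta,2\delta]$ and so on (the Lipschitz bound is uniform in $u$, so $\delta$ does not shrink). The resulting $x$ is continuous by construction. To obtain the $B^\infty$ bound I would apply \textbf{(A3)} and integral Gronwall: from
\[
|x(t)|_{{\mathbb R}^n}\le |x_0|_{{\mathbb R}^n}+\int_0^t K(s)\bigl(1+|x(s)|_{{\mathbb R}^n}+|u_s|_{{\mathbb R}^d}\bigr)\,ds,
\]
Cauchy--Schwarz on the $K\cdot|u|$ term together with Gronwall yields $\sup_{t\in[0,T]}|x(t)|_{{\mathbb R}^n}\le C(1+|x_0|_{{\mathbb R}^n}+\|u\|_{L^2})$, placing $x$ in $B^\infty([0,T],{\mathbb R}^n)$.

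For part \textbf{2)}, suppose $u^{i,\alpha}\to u^{i,o}$ in ${\mathbb U}_{reg}^i[0,T]$ for each $i$, and let $x^\alpha,x^o$ be the corresponding solutions produced by part \textbf{1)}. Writing the difference of integral equations and adding and subtracting $f(s,x^o(s),u^\alpha_s)$,
\[
x^\alpha(t)-x^o(t)=\int_0^t\bigl[f(s,x^\alpha,u^\alpha)-f(s,x^o,u^\alpha)\bigr]ds+\int_0^t\bigl[f(s,x^o,u^\alpha)-f(s,x^o,u^o)\bigr]ds,
\]
assumptions \textbf{(A1)} and \textbf{(A2)} give
\[
|x^\alpha(t)-x^o(t)|_{{\mathbb R}^n}\le\int_0^t K(s)|x^\alpha(s)-x^o(s)|_{{\mathbb R}^n}ds+\int_0^T K(s)|u^\alpha_s-u^o_s|_{{\mathbb R}^d}ds.
\]
The last integral is bounded by $\|K\|_{L^2}\|u^\alpha-u^o\|_{L^2}$ via Cauchy--Schwarz, which tends to $0$. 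Applying Gronwall's inequality with kernel $K\in L^1$ then gives $\sup_{t\in[0,T]}|x^\alpha(t)-x^o(t)|_{{\mathbb R}^n}\to 0$, i.e.\ $x^\alpha\to x^o$ in $B^\infty([0,T],{\mathbb R}^n)$.

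The main technical point to watch is that $K$ is only in $L^2$, not essentially bounded, so classical statements of Picard's theorem and Gronwall's inequality must be invoked in their integral (non-pointwise) forms; this is routine but needs to be stated carefully. Once that is done, both the existence/uniqueness step and the stability step reduce to a single application of Gronwall to a linear integral inequality, with the forcing term $\|K\|_{L^2}\|u^\alpha-u^o\|_{L^2}$ in part \textbf{2)} providing the convergence.
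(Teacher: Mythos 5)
Your proposal is correct, and it is essentially the argument the paper relies on: the paper's ``proof'' is only a pointer to Lemma~1 of the cited reference, where the same Picard/Banach fixed-point construction on the integral equation plus a Gronwall estimate (with the $L^2$ kernel $K$ handled exactly as you do, via absolute continuity of the integral and Cauchy--Schwarz) is carried out for the stochastic analogue. Nothing further is needed.
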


\begin{proof}
\TC{Similar to \cite[Lemma 1]{charalambous-ahmedFIS_Parti2012}.}
\end{proof}

\subsection{Team and PbP Optimality Conditions}\label{toc}

For the  derivation of optimality conditions we shall require stronger regularity conditions for  $f$, as well as, for the running and terminal pay-offs functions $\{\ell,\varphi\}.$  These are given below.

\begin{assumptions}
\label{NCD1}
The maps of $\{f,\ell, \varphi\} $ satisfy the following conditions.\\
\noi \textbf{\emph{(B1)}} The map $f: [0,T] \times {\mathbb R}^n \times {\mathbb A}^{(N)} \longrightarrow {\mathbb R}^n$ is continuous in $(t,x,u)$ and continously differentiable with respect to $x,u$; \\
\noi \textbf{\emph{(B2)}} The first derivatives of $\{f_x, f_u \}$ are bounded  uniformly on $[0,T] \times {\mathbb R}^n \times {\mathbb A}^{(N)}$. \\
\noi \textbf{\emph{(B3)}} The maps $\ell: [0,T] \times {\mathbb R}^n \times {\mathbb A}^{(N)} \longrightarrow (-\infty, \infty]$ is Borel measurable, continuously differentiable with respect to  $(x,u)$, the map $\varphi: [0,T] \times {\mathbb R}^n \longrightarrow (-\infty, \infty]$ is continously differentiable with respect to $x$, $\ell(0,0,t)$ is bounded, and there exist $K_1, K_2 >0$ such that
\begin{align*}
|\ell_x(t,x,u)|_{{\mathbb R}^n}+|\ell_u(t,x,u) |_{{\mathbb R}^d} &\leq K_1 \big(1+|x|_{{\mathbb R}^n} + |u|_{{\mathbb R}^d} \big), \\ \hso |\varphi_x(x)|_{{\mathbb R}^n} &\leq K_2 \big(1+ |x|_{{\mathbb R}^n}\big).
\end{align*}
\noi \textbf{\emph{(B4)}} $|h^i(t,x)|_{{\mathbb R}^{k_i}} \leq K  \sup_{ 0\leq s \leq t}    \Big(1 + |x(s)|_{{\mathbb R}^n}^2\Big), \forall t \in [0,T], x \in C([0,T], {\mathbb R}^n),  i=1, \ldots, N$.
\end{assumptions}

Note that {\bf (B1), (B2)} imply that $|f(t,x,u)|_{{\mathbb R}^n}     \leq K \big(1+|x|_{{\mathbb R}^n} + |u|_{{\mathbb R}^d} \big), K>0$, and {\bf (B4)} implies $h^i \in B^\infty([0,T], {\mathbb R}^n), i=1, \ldots, N$.\\

First, we derive necessary conditions for team and PbP optimality. For this derivation, we need the so-called \emph{variational equation}.  We note that for differential systems, the strategies can be either open-loop or feedback, and feedback strategies do not give smaller pay-off. 
Thus, the minimum pay-off attainable under open loop strategies is equal to the minimum pay-off attainable under feedback strategies. This is well known in deterministic optimal control theory. The point to be made is that when considering variations in the state trajectory the DM strategies do not react so we do not need to introduce derivatives of the $u$ variable  with respect to the state. 

Suppose $u^o \tri (u^{1,o}, u^{2,o}, \ldots, u^{N,o}) \in {\mathbb U}_{rel}^{(N)}[0,T]$ denotes the optimal decision and $u \tri (u^1, u^2, \ldots, u^N) \in {\mathbb U}_{rel}^{(N)}[0,T]$ any other decision.  Since ${\mathbb A}^i$ is convex then ${\mathbb U}_{reg}^i[0,T]$ is convex $\forall i \in {\mathbb Z}_N$, it is clear that  for any $\varepsilon \in [0,1]$,
\bes
 u_t^{i,\varepsilon} \tri u_t^{i,o} + \varepsilon (u_t^i-u_t^{i,o}) \in {\mathbb U}_{reg}^i[0,T], \hst \forall i \in {\mathbb Z}_N.
 \ees
 Let $x^{\varepsilon}(\cdot)\equiv x^\veps(\cdot; u^\veps(\cdot))$ and  $x^{o}(\cdot) \equiv x^o(\cdot;u^o(\cdot))  \in B^{\infty}([0,T],{\mathbb R}^n)$ denote the solutions  of the system equation (\ref{eq1})  corresponding to  $u^{\varepsilon}(\cdot)$ and $u^o(\cdot)$, respectively.  Consider the limit
 \bea
  Z(t) \tri \lim_{\varepsilon\downarrow 0}  \frac{1}{\veps} \Big\{x^{\varepsilon}(t)-x^o(t)\Big\} , \hst t \in [0,T]. \label{veq}
  \eea
  
We have the following result characterizing the variational equation.
\begin{lemma}\label{lemma4.1}
Suppose Assumptions~\ref{NCD1} hold and consider strategies ${\mathbb U}_{reg}^{(N)}[0,T]$.  The process $\{Z(t): t \in [0,T]\}$ defined by (\ref{veq})  is an element of the Banach space $B^{\infty}([0,T],{\mathbb R}^n)$ and it  is the unique solution of the variational differential equation
 \begin{align}
 \dot{Z}(t) =& f_x(t,x^o(t),u_t^o)Z(t)   \label{eq9} \\
 &+ \sum_{i=1}^N f_{u^i}(t,x^o(t),u_t^{,o})(u_t^i-u_t^{i,o}),  \: Z(0)=0. \nonumber 
 \end{align}
 having trajectories $Z \in C([0,T], {\mathbb R}^n)$. 
  \end{lemma}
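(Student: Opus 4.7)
The plan is to introduce the difference quotient $Z^{\veps}(t)\tri\frac{1}{\veps}(x^{\veps}(t)-x^o(t))$, derive an integral equation for it by a fundamental theorem of calculus expansion of $f$, obtain a uniform-in-$\veps$ a priori bound via Gronwall's inequality, and then pass to the limit to identify the limit process $Z$ as the unique solution of the claimed variational equation. First, I would write the two integral forms of (\ref{eq1}) corresponding to $u^{\veps}$ and $u^o$, subtract them, and exploit the representation
\begin{align*}
f(s,x^{\veps},u^{\veps}) - f(s,x^o,u^o)
&= A^{\veps}(s)\,(x^{\veps}(s)-x^o(s)) \\
&\quad + \sum_{i=1}^{N} B^{i,\veps}(s)\,(u_s^i-u_s^{i,o}),
\end{align*}
where $A^{\veps}(s)\tri\int_0^1 f_x(s,x^o+\lambda(x^{\veps}-x^o),u^{\veps})\,d\lambda$ and $B^{i,\veps}(s)\tri\int_0^1 f_{u^i}(s,x^o,u^o+\lambda(u^{\veps}-u^o))\,d\lambda$, both of which are well defined and uniformly bounded by Assumptions~\ref{NCD1}~\textbf{(B1)}--\textbf{(B2)}. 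Dividing by $\veps$ yields
\begin{equation*}
Z^{\veps}(t) = \int_0^t A^{\veps}(s) Z^{\veps}(s)\, ds + \sum_{i=1}^{N}\int_0^t B^{i,\veps}(s)(u_s^i-u_s^{i,o})\, ds.
\end{equation*}

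Next, using uniform boundedness of $f_x,f_u$ and the Cauchy--Schwarz inequality on the second term (recall $u^i,u^{i,o}\in L^2([0,T],\mathbb{R}^{d_i})$), Gronwall's inequality gives a constant $C>0$, independent of $\veps\in[0,1]$, such that $\sup_{t\in[0,T]}|Z^{\veps}(t)|_{\mathbb{R}^n}\le C\,\|u-u^o\|_{L^2}$. This establishes the uniform $B^{\infty}$ a priori bound on the family $\{Z^{\veps}\}_{\veps\in(0,1]}$. By the same Gronwall argument applied to a parallel integral equation, the candidate limit ODE (\ref{eq9}) admits a unique solution $Z\in B^{\infty}([0,T],{\mathbb R}^n)$, and continuity $Z\in C([0,T],{\mathbb R}^n)$ follows immediately from its integral representation with locally integrable right-hand side.

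To identify $\lim_{\veps\downarrow 0}Z^{\veps}=Z$, I would form the difference $W^{\veps}\tri Z^{\veps}-Z$ and show $\|W^{\veps}\|\to 0$. The key inputs here are: (i) Lemma~\ref{lemma3.1}(2), which ensures $x^{\veps}\to x^o$ in $B^{\infty}$ as $\veps\downarrow 0$, and (ii) the continuity of $f_x,f_u$ together with their uniform boundedness, which via dominated convergence give $A^{\veps}(\cdot)\to f_x(\cdot,x^o,u^o)$ and $B^{i,\veps}(\cdot)\to f_{u^i}(\cdot,x^o,u^o)$ in the appropriate sense. One more application of Gronwall's inequality to the integral equation governing $W^{\veps}$, where the inhomogeneous term tends to zero, closes the argument and yields $Z^{\veps}\to Z$ in $B^{\infty}([0,T],\mathbb{R}^n)$.

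The main technical obstacle is the passage to the limit in the coefficient $B^{i,\veps}(s)(u_s^i-u_s^{i,o})$ under the integral: although $B^{i,\veps}$ converges pointwise to $f_{u^i}(s,x^o,u^o)$, the multiplicative factor $(u^i-u^{i,o})$ is merely $L^2$, so dominated convergence must be invoked with a uniform $L^\infty$ bound on $B^{i,\veps}$ provided by \textbf{(B2)} to pass from pointwise to $L^2$ convergence of the driver. Once this is handled the rest follows from linear Gronwall estimates. The existence, uniqueness, $B^{\infty}$-membership, and continuity of $Z$ then follow as asserted.
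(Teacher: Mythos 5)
Your argument is correct and follows essentially the same route as the paper, which gives no proof of its own but defers to the standard variational-equation argument of the cited companion paper: a fundamental-theorem-of-calculus decomposition of the drift increment, uniform Gronwall bounds from \textbf{(B2)}, and a dominated-convergence passage to the limit using the continuous dependence in Lemma~\ref{lemma3.1}. The only blemish is a dropped factor of $\varepsilon$ in your displayed decomposition --- since $u^\varepsilon - u^o = \varepsilon(u-u^o)$, the control term should read $\sum_{i=1}^{N} B^{i,\varepsilon}(s)\,\varepsilon(u_s^i - u_s^{i,o})$ before division --- but this cancels exactly as you intend once you divide by $\varepsilon$, so the integral equation for $Z^{\varepsilon}$ and everything downstream is correct.
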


\begin{proof}
\TC{Similar to \cite[Lemma 2]{charalambous-ahmedFIS_Parti2012}.}
\end{proof}

Before we show the optimality conditions we define the Hamiltonian system of equations, i.e., 
\bes
 {\cal H}: [0, T] \times {\mathbb R}^n\times {\mathbb R}^n\times   {\mathbb A}^{(N)} \longrightarrow {\mathbb R}
\ees
  given  by
   \begin{align}
    { H} (t,x,\psi,u) \tri    \langle f(t,x,u),\psi \rangle  + \ell(t,x,u),  \ t \in  [0, T]. \label{h1}
    \end{align}
    \noi For any $u \in {\mathbb U}_{reg}^{(N)}[0,T]$, the adjoint process  $\psi \in  L^2([0,T], {\mathbb R}^n)$ satisfies the following backward  differential equation
\begin{subequations}
\begin{align}
\dot{\psi} (t) =& -f_x^{*}(t,x(t),u_t)\psi (t)  -\ell_x(t,x(t),u_t) \nonumber \\
=& - { H}_x (t,x(t),\psi(t),u_t),  \hso t \in [0,T). \label{adj1a} \\
\hst \psi(T) =&   \varphi_x(x(T)). \label{eq18}
 \end{align}
 \end{subequations}
In terms of the Hamiltonian, the state process satisfies the differential equation
\begin{align*}
\dot{x}(t) =&f(t,x(t),u_t)= { H}_\psi (t,x(t),\psi(t),u_t),  \  t \in (0,T] \\
        x(0) =&  x_0. 
 \end{align*}

Next, we state and prove the necessary conditions for team optimality. Specifically, given that $u^o \in {\mathbb U}_{reg}^{(N)}[0,T]$  is team optimal, we show that it leads naturally  to   the Hamiltonian system of equations (called necessary conditions).

\begin{theorem} [\textbf{Necessary conditions for team optimality}]\label{theorem5.1}
Consider Problem~\ref{problem1} under Assumptions~\ref{NCD1}, and assume ${\mathbb A}^i$ are closed, bounded and convex subsets of ${\mathbb R}^{d_i}$, and $\{y^i(s): 0\leq s \leq t\}$ generates  ${\cal H}_{0,t}^{y^i}$-a closed subspace of a Hilbert space for $ i=1, \ldots, N$.\\
For  an element $ u^o \in {\mathbb U}_{reg}^{(N)}[0,T]$ with the corresponding solution $x^o \in B^{\infty}([0,T],{\mathbb R}^n)$ to be team optimal, it is necessary  that
the following conditions  hold. \\
\noi \textbf{\emph{1)}} There exists  a process $\psi^o \in  L^2([0,T],{\mathbb R}^n)$. \\
\noi \textbf{\emph{2)}} The triple $\{u^o,x^o,\psi^o\}$ satisfy the  inequality:
  \begin{align}
  \sum_{i=1}^N \int_{0}^{T}  \la { H}_{u^i}(t,x^o(t)\psi^o(t)&, u_t^{o}),u_t^i-u_t^{i,o}\ra dt
  \geq 0,\nonumber \\
 & \hst \forall u \in {\mathbb U}_{reg}^{(N)}[0,T]. \label{eq16} 
\end{align}
\noi \textbf{\emph{3)}} The process ${\psi}^o$  is the  unique $C([0,T], {\mathbb R}^n)$ solution of the backward differential equation (\ref{adj1a}), (\ref{eq18}) and   $u^o \in {\mathbb U}_{reg}^{(N)}[0,T]$ satisfies  the  inequalities:
\begin{align}
   \la {\bf \Pi}_{{\cal H}_{0,t}^{y^i}}& \Big( H_{u^i}(t,x^o(t),\psi^o(t),u_t^{o})\Big) , v^i-u_t^{i,o}\ra   \geq 0, \nonumber \\
   &\hst \forall v^i \in {\mathbb A}^i,   t \in [0,T], i=1,2,\ldots, N.   \label{eqh35}
\end{align}
\end{theorem}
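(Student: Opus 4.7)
The plan is to carry out the standard variational argument for an optimal control problem, adapted to the multi-agent, decentralized-information setting, and then pass from the integral Gateaux inequality to the projected pointwise inequality using the information structure.

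First I would introduce the convex perturbation $u_t^{i,\veps} \tri u_t^{i,o} + \veps(u_t^i - u_t^{i,o})$, which lies in ${\mathbb U}_{reg}^i[0,T]$ by convexity of ${\mathbb A}^i$ and the fact that the perturbation preserves adaptedness to $\{{\cal B}_t({\cal Y}_{[0,T]}^i)\}$. By Lemma~\ref{lemma3.1}, the corresponding state $x^\veps$ is well-defined in $B^\infty([0,T],{\mathbb R}^n)$ and depends continuously on $u^\veps$. By Lemma~\ref{lemma4.1}, the variational process $Z(\cdot)$ defined in \eqref{veq} is the unique $C([0,T],{\mathbb R}^n)$ solution of \eqref{eq9}. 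Under Assumptions~\ref{NCD1}, a standard computation of $\frac{d}{d\veps}J(u^\veps)\big|_{\veps=0^+}$ gives, by optimality of $u^o$,
\begin{align*}
0 \leq & \inof \la \ell_x(t,x^o(t),u_t^o), Z(t)\ra dt \\
& + \sumin \inof \la \ell_{u^i}(t,x^o(t),u_t^o), u_t^i - u_t^{i,o}\ra dt \\
& + \la \varphi_x(x^o(T)), Z(T)\ra.
\end{align*}

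Next I would introduce the adjoint process $\psi^o$ as the unique solution of the linear backward ODE \eqref{adj1a}--\eqref{eq18}; existence and continuity in $C([0,T],{\mathbb R}^n)$ follow from Assumption~\textbf{(B2)} applied to $f_x^*$ and the linear growth of $\ell_x$, $\varphi_x$ from \textbf{(B3)}. Computing $\frac{d}{dt}\la \psi^o(t), Z(t)\ra$ along the trajectories of \eqref{eq9} and \eqref{adj1a}, integrating from $0$ to $T$, and using $Z(0)=0$ together with the terminal condition \eqref{eq18}, the $Z$-terms in the inequality above cancel, leaving exactly the integral condition \eqref{eq16}. This establishes parts \textbf{1)} and \textbf{2)}.

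For \textbf{3)}, the passage from \eqref{eq16} to the projected pointwise condition \eqref{eqh35} is the main obstacle, since the admissibility constraint forces each $u^i$ to be adapted to the filtration $\{{\cal B}_t({\cal Y}_{[0,T]}^i)\}$, and hence its values at time $t$ effectively live in ${\cal H}_{0,t}^{y^i}$. The strategy is: take an arbitrary $v^i \in {\mathbb A}^i$ and $A \in {\cal B}([0,T])$, and construct an admissible variation of the form $u_t^i - u_t^{i,o} = {\bf 1}_A(t)\,\proj(v^i - u_t^{i,o})$ applied only in coordinate $i$, keeping $u^{j} = u^{j,o}$ for $j \neq i$. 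Substituting this into \eqref{eq16}, and using the self-adjointness of the orthogonal projection ${\bf \Pi}_{{\cal H}_{0,t}^{y^i}}$ together with the fact that $u_t^{i,o}$ already lies in ${\cal H}_{0,t}^{y^i}$, the inner product with $H_{u^i}$ transfers the projection onto the gradient, yielding
\begin{align*}
\int_A \la {\bf \Pi}_{{\cal H}_{0,t}^{y^i}}\bigl(H_{u^i}(t,x^o(t),\psi^o(t),u_t^o)\bigr), v^i - u_t^{i,o}\ra\, dt \geq 0.
\end{align*}
Since $A \in {\cal B}([0,T])$ is arbitrary, a standard Lebesgue-point argument (using continuity in $t$ provided by \textbf{(B1)}--\textbf{(B3)} and $x^o,\psi^o \in C([0,T],{\mathbb R}^n)$) gives \eqref{eqh35} for almost every $t \in [0,T]$ and every $v^i \in {\mathbb A}^i$. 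Performing this for each $i \in {\mathbb Z}_N$ completes the proof. The delicate point is the commutation between the variational inequality and the projection operator, which relies crucially on the fact that admissible perturbations are constrained to ${\cal H}_{0,t}^{y^i}$ and that ${\bf \Pi}_{{\cal H}_{0,t}^{y^i}}$ is self-adjoint and idempotent on the underlying Hilbert space.
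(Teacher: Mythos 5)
Your treatment of parts \textbf{1)} and \textbf{2)} (convex perturbation, the variational process of Lemma~\ref{lemma4.1}, the Gateaux derivative of $J$, and the adjoint $\psi^o$ introduced so that $\frac{d}{dt}\la \psi^o(t),Z(t)\ra$ cancels the $Z$-terms) is the standard argument and is exactly what the paper leans on (it only cites an external reference for these parts). For part \textbf{3)} your overall strategy --- move the projection onto $H_{u^i}$ because admissible increments live in ${\cal H}_{0,t}^{y^i}$, then localize in $t$ --- is also the paper's. The paper first establishes the projected integral inequality (\ref{eq36a}) for \emph{all} admissible $u$ via the orthogonal decomposition $H_{u^i}={\bf \Pi}_{{\cal H}_{0,t}^{y^i}}(H_{u^i})+E(t)$ with $E(t)\perp {\cal H}_{0,t}^{y^i}$, then uses a needle variation on $I_\veps=[t,t+\veps]$ divided by $|I_\veps|$ (your Lebesgue-point step), and finally a contradiction argument on $A^i\tri\{t: g^i(t)<0\}$ to pass to deterministic $v^i$.

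The genuine gap is in your construction of the test control. You set $u_t^i-u_t^{i,o}={\bf 1}_A(t)\,{\bf \Pi}_{{\cal H}_{0,t}^{y^i}}\big(v^i-u_t^{i,o}\big)$ and substitute it into (\ref{eq16}), but this $u^i$ need not belong to ${\mathbb U}_{reg}^i[0,T]$: ${\bf \Pi}_{{\cal H}_{0,t}^{y^i}}$ is an orthogonal projection onto a linear subspace of the Hilbert space ${\cal H}^i$, and such projections do not preserve membership in the bounded convex constraint set, so $u_t^{i,o}+{\bf \Pi}_{{\cal H}_{0,t}^{y^i}}(v^i-u_t^{i,o})$ can leave ${\mathbb A}^i$ (e.g.\ ${\mathbb A}^i=[0,1]$, $u_t^{i,o}=0$, and observations whose closed span sends the constant $v^i=1$ to a value outside $[0,1]$). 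Hence (\ref{eq16}) cannot be applied to this perturbation and the displayed inequality over $A$ is not justified as written. The repair is to never project the control: splice in the unprojected, manifestly admissible choice $u_t^i=v^i$ on $A$ (or on $I_\veps$) and $u_t^i=u_t^{i,o}$ elsewhere, observe that $u_t^i-u_t^{i,o}\in{\cal H}_{0,t}^{y^i}$ because both controls are adapted to the observation structure, and then transfer the projection onto the gradient side by self-adjointness, i.e.\ $\la H_{u^i}, u_t^i-u_t^{i,o}\ra=\la {\bf \Pi}_{{\cal H}_{0,t}^{y^i}}(H_{u^i}), u_t^i-u_t^{i,o}\ra$. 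With that substitution your localization and the passage to all $v^i\in{\mathbb A}^i$ go through and recover (\ref{eqh35}).
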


\begin{proof}
\TC{For \textbf{1)} and  \textbf{2)},  this is similar to \cite[Theorem 6]{charalambous-ahmedFIS_Parti2012}. For \textbf{3)}, consider  $\Big\{H_{u^i}(t,x^o, \psi^o(t), u_t^o): t \in [0,T]\Big\}$ lying in the Hilbert space of square integrable functions ${\cal H}^i, i=1, \ldots, N$, and the set of observables $\{y^i(t): t \in [0,T]\}$ generating a closed subspace ${\cal H}_{0,t}^{y^i} \tri \overline{\mbox{Span}}\Big\{y^i(s): 0 \leq s \leq t\Big\} \subset   {\cal H}^i, i\in {\mathbb Z}_N$. Then for any $H_{u^i} \in {\cal H}^i$ we have the decomposition
 \begin{align*}
 H_{u^i} &(t,x^o(t),\psi^o(t),u_t^{o}) \\
& = {\bf \Pi}_{{\cal H}_{0,t}^{y^i}}\Big(H_{u^i} (t,x^o(t),\psi^o(t),u_t^{o})\Big) + E(t), \\
& \: E(t) \perp {\cal H}_{0,t}^{y^i}, \: t \in [0,T],  i\in {\mathbb Z}_N.
\end{align*}
 Since $u_t-u_t^i \in  {\cal H}_{0,t}^{y^i}$, by substituting the above decomposition in (\ref{eq16}) we obtain
 \begin{align}
  \sum_{i=1}^N    \int_{0}^{T}  \la &{\bf  \Pi}_{ {\cal H}_{0,t}^{y^i}}\Big( { H}(t,x^o(t),\psi^o(t),u_t^{o})\Big), \nonumber \\
  &u_t^i-u_t^{i,o} \ra   dt   \geq 0, \hso \forall u \in {\mathbb U}_{reg}^{(N)}[0,T].  \label{eq36a}
\end{align}
\noi Let  $t \in (0,T),$ and $\varepsilon >0$, and  consider the set $I_{\varepsilon} \equiv [t,t+\varepsilon] \subset [0,T]$    such that $|I_{\varepsilon}| \rightarrow 0$   as $\varepsilon \rightarrow 0,$ for $i=1,2, \ldots, N$.  For any ${\cal H}_{0,t}^{y^i}-$progressively measurable   $v_t^i \in {\mathbb A}^i,$ construct
\bea
 u_t^i = \begin{cases}  v_t^i & ~ \mbox{for}~~ t \in I_{\varepsilon}  \\   u_t^{i,o} & \mbox{ otherwise}           \end{cases} \hst i=1,2, \ldots, N. \label{cc1}
 \eea
 Clearly, it follows from the above construction that $u^i \in {\mathbb U}_{reg}^i[0,T].$  Substituting  (\ref{cc1})  in (\ref{eq36a}) we obtain the following inequality
\begin{align}
\sum_{i=1}^N \int_{ I_{\varepsilon}} \la  {\bf \Pi}_{{\cal H}_{0,t}^{y^i}} &\Big( { H}_{u^i}(t,x^o(t),\psi^o(t),u_t^{o})\Big),v_t^i-u_t^{i,o})\ra dt
\nonumber  \\ &\geq  0, \ \forall v_t^i \in {\mathbb A}^i,  \hso i=1,2,\ldots,N.\label{eq37}
   \end{align}
   Letting $|I_{\varepsilon}|$ denote the Lebesgue  measure of the set $I_{\varepsilon}$ and dividing the above expression  by $|I_{\varepsilon}|$ and letting $\varepsilon \rightarrow 0$ we arrive at the following inequality.
\begin{align}  
 \sum_{i=1}^N  \la& {\bf \Pi}_{{\cal H}_{0,t}^{y^i}} \Big( {\mathbb H}_{u^i}(t,x^o(t),\psi^o(t),u_t^{o}\Big), v_t^i-u_t^{i,o}\ra
 \geq 0, \nonumber \\
& \hst \forall v_t^i \in {\mathbb A}^i,  \hso t \in [0,T], \hso , i=1,2,\ldots,N. \label{eq37ccc}
\end{align}
To complete the proof  of {\bf 3)}  for a given $v^i \in {\mathbb A}^i$ (deterministic) define
\begin{align}
g^i(t) \tri   \la {\bf \Pi}_{{\cal H}_{0,t}^{y^i}} &\Big( {\mathbb H}_{u^i}(t,x^o(t),\psi^o(t),u_t^{o}\Big), v^i-u_t^{i,o}\ra, \nonumber \\
& \hst   t \in [0,T], \hso , i=1,2,\ldots,N. \label{eq37c}
   \end{align}
   Then $g^i(t) \in {\cal H}_{0,t}^{y^i}$. 
We shall show that
\bea
g^i(t) \geq 0,  \hso \forall v^i \in {\mathbb A}^i, \:  \: t \in [0,T],   \: \forall i \in {\mathbb Z}_N. \label{eq37ab}
\eea
Suppose for some $i \in {\mathbb Z}_N$, (\ref{eq37ab}) does  not hold, and let $A^i \tri \{t: g^i(t)<0 \}$. Since $g^i(t) \in  {\cal H}_{0, t}^{y^i}$, $\forall t \in [0,T]$   we can choose $v_t^i$  in  (\ref{eq37ccc}) as   
$$
v_t^i \tri \left\{ \begin{array}{l} v ~\mbox{on}~ A^i \\ u_t^{i,o} \: \mbox{outside} \: A^i \end{array} \right.
$$
 together with $v_t^j =u_t^{j,o}, j \neq i, j \in {\mathbb Z}_N$. Substituting this in  (\ref{eq36a}) (with $u_t^i = v_t^i$) we arrive at   $\int_{A^i} g^i(t) dt  \geq 0,$ which contradicts the definition of $A^i$, unless $A^i$ has Lebesgue measure zero. Hence, (\ref{eq37ab}) holds which is precisely (\ref{eqh35}). This completes the derivation.}
\end{proof}

Next, we show  that the necessary conditions of optimality (\ref{eqh35}) are  also sufficient under certain  convexity conditions. 

\begin{theorem}[\textbf{Sufficient conditions for team optimality}]\label{theorem5.1s}

Consider Problem~\ref{problem1} under the conditions of Theorem~\ref{theorem5.1}, and  let $( u^o(\cdot), x^o(\cdot))$ denote any control-state pair  (decision-state)  and let $\psi^o(\cdot)$ the corresponding adjoint processes. Suppose the following conditions hold: \\
\noi \textbf{\emph{C1}} ${ H} (t, \cdot,x,u),   t \in  [0, T]$ is convex in $(x, u) \in {\mathbb R}^n \times {\mathbb A}^{(N)}$; \\
\noi \textbf{\emph{C2}} $\varphi(\cdot)$ is convex in $x \in {\mathbb R}^n$. \\
\noi Then $(u^o(\cdot),x^o(\cdot))$ is  team optimal  if it satisfies \eqref{eqh35}. In other words, necessary conditions are also sufficient.
\end{theorem}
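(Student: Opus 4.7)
The plan is to deduce sufficiency from a direct convexity argument on the pay-off difference, with the Hamiltonian formulation and the adjoint equation linking the state variation to the control variation, and with the information-subspace projection closing the argument at the end.

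First, I would pick any $u \in {\mathbb U}_{reg}^{(N)}[0,T]$ with associated state trajectory $x(\cdot) \in B^\infty([0,T],{\mathbb R}^n)$ and form $\Delta J \tri J(u) - J(u^o)$. Using the definition of the Hamiltonian in (\ref{h1}), I would rewrite the running pay-off difference as
\begin{align*}
\ell(t,x,u) - \ell(t,x^o,u^o) =& [H(t,x,\psi^o,u) - H(t,x^o,\psi^o,u^o)] \\
&- \langle f(t,x,u) - f(t,x^o,u^o), \psi^o(t)\rangle .
\end{align*}
Since $f(t,x,u) - f(t,x^o,u^o) = \dot x(t) - \dot x^o(t)$, integration by parts against $\psi^o$, which satisfies the backward adjoint equation (\ref{adj1a}) with terminal condition (\ref{eq18}), together with $x(0)=x^o(0)=x_0$, converts the cross term into $\langle \varphi_x(x^o(T)), x(T) - x^o(T)\rangle + \int_{0}^{T} \langle H_x(t,x^o,\psi^o,u^o), x(t) - x^o(t)\rangle dt$.

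Next, I would invoke the two convexity hypotheses. Condition \textbf{C2} yields $\varphi(x(T)) - \varphi(x^o(T)) \geq \langle \varphi_x(x^o(T)), x(T) - x^o(T)\rangle$, which cancels the boundary term produced by the integration by parts. Condition \textbf{C1}, applied with $(t,\psi)$ held at $(t,\psi^o(t))$, gives the first-order inequality
\begin{align*}
H(t,x,\psi^o,u) -& H(t,x^o,\psi^o,u^o) \geq \langle H_x, x-x^o\rangle \\
&+ \sum_{i=1}^N \langle H_{u^i}, u_t^i - u_t^{i,o}\rangle,
\end{align*}
with all partials evaluated at $(t,x^o(t),\psi^o(t),u_t^o)$. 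Adding and telescoping with the previous identity collapses $\Delta J$ to the lower bound $\Delta J \geq \sum_{i=1}^N \int_{0}^{T} \langle H_{u^i}(t,x^o(t),\psi^o(t),u_t^o), u_t^i - u_t^{i,o}\rangle dt$.

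Finally, I would exploit the decentralized information structure. Since $u^i$ and $u^{i,o}$ are both $\{{\cal B}_t({\cal Y}_{[0,T]}^i)\}$-progressively measurable, their difference lies in the closed subspace ${\cal H}_{0,t}^{y^i}$, so the orthogonal decomposition used in the proof of Theorem~\ref{theorem5.1} gives $\langle H_{u^i}, u_t^i - u_t^{i,o}\rangle = \langle {\bf \Pi}_{{\cal H}_{0,t}^{y^i}}(H_{u^i}), u_t^i - u_t^{i,o}\rangle$ for each $t \in [0,T]$. Invoking the necessary condition (\ref{eqh35}) at each $t$ with the choice $v^i = u_t^i \in {\mathbb A}^i$ shows the integrand is nonnegative, hence $\Delta J \geq 0$. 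The main obstacle I anticipate is the last step: (\ref{eqh35}) is phrased for deterministic $v^i \in {\mathbb A}^i$, so I must justify its pointwise application along the candidate trajectory $t \mapsto u_t^i$; in the present deterministic setting this reduces to applying the inequality at almost every $t$ and integrating, provided one checks that $t \mapsto \langle {\bf \Pi}_{{\cal H}_{0,t}^{y^i}}(H_{u^i}(t,x^o,\psi^o,u_t^o)), u_t^i - u_t^{i,o}\rangle$ is Lebesgue measurable, which follows from the regularity afforded by Assumptions~\ref{NCD1} and Lemma~\ref{lemma3.1}.
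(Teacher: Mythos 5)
Your proposal is correct and follows essentially the same route as the paper's own proof: express the pay-off difference via the Hamiltonian, integrate $\langle \psi^o, x - x^o\rangle$ by parts using the adjoint equation to cancel the boundary term against the convexity bound for $\varphi$, apply the first-order convexity inequality for $H$ in $(x,u)$, and then pass to the projection ${\bf \Pi}_{{\cal H}_{0,t}^{y^i}}$ before invoking \eqref{eqh35}. The measurability caveat you raise at the end is legitimate but harmless here, since the integrated variational inequality \eqref{eq16} (equivalently \eqref{eq36a}) already holds for all admissible $u$ and is all the paper actually uses at that step.
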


\begin{proof}
 Let $u^o \in {\mathbb U}_{reg}^{(N)}[0,T]$ denote a candidate for the optimal team decision and $u \in {\mathbb U}_{reg}^{(N)}[0,T]$
any other decision. Then,
 \begin{align}
 J(u^o) -J(u)= & \int_{0}^{T}  \Big\{\ell(t,x^o(t),u_t^{o})  -\ell(t,x(t),u_t) \Big\} dt \nonumber \\
     &+ \Big(\varphi(x^o(T)) - \varphi(x(T))\Big)     . \label{s1}
  \end{align}
By the convexity of $\varphi(\cdot)$, we have
\bea
\varphi(x(T))-\varphi(x^o(T)) \geq \la \varphi_x(x^o(T)), x(T)-x^o(T)\ra . \label{s2}
\eea
Substituting (\ref{s2}) into (\ref{s1}) yields
 \begin{align}
 J(u^o) -J(u)\leq   & \la \varphi_x(x^o(T)),   x^o(T) - x(T)\ra \nonumber \\
 + &        \int_{0}^{T}  \Big(\ell(t,x^o(t),u_t^{o})  -\ell(t,x(t),u_t) \Big)  dt     . \label{s3}
  \end{align}
Applying the  differential rule to $\la\psi^o,x-x^o\ra$ on the interval $[0,T]$ we obtain the following equation.
\begin{align}
 \la \psi^o(T)&,  x(T) - x^o(T)\ra  \nonumber \\
 = &   \la \psi^o(0),   x(0) - x^o(0)\ra  \nonumber \\
 &+\int_{0}^{T}  \la -f_x^{*}(t,x^o(t),u_t^{o})\psi^o(t)dt  \nonumber \\
&-\ell_x(t,x^o(t),u_t^{o}), x(t)-x^o(t)\ra dt  \nonumber \\
&+   \int_{0}^{T}  \la \psi^o(t), f(t,x(t),u_t)- f(t,x^o(t),u_t^{o})\ra dt \nonumber \\
= & -   \int_{0}^{T} \la { H}_x(t,x^o(t),\psi^o(t),u_t^{o}), x(t)-x^o(t)\ra dt \nonumber \\
& +   \int_{0}^{T} \la \psi^o(t), f(t,x(t),u_t)-f(t,x^o(t),u_t^{o})\ra dt . \label{s4}
\end{align}
Note that $\psi^o(T)=\varphi_x(x^o(T))$. Substituting (\ref{s4}) into (\ref{s3}) we obtain
 \begin{align}
 J(u^o) &-J(u)  \leq    \int_{0}^{T}  \Big( {H}(t,x^o(t),\psi^o(t),u_t^{o})  \nonumber \\
 &-   {H}(t,x(t),\psi^o(t), u_t^{})\Big)dt  \nonumber \\
 &-    \int_{0}^{T} \la { H}_x(t,x^o(t),\psi^o(t),u_t^{o}), x^o(t)-x(t)\ra dt     . \label{s5}
  \end{align}
By hypothesis of convexity of $ { H}$ in $(x,u) \in {\mathbb R}^n \times {\mathbb A}^{(N)}$,  then  (\ref{s5}) reduces to
\begin{align}
  &J(u^o) -J(u)   \nonumber \\
  &\leq  \sum_{i=1}^N
       \int_{0}^{T}  <{H}_{u^i}(t,x^o(t),\psi^o(t),u_t^o), u_t^{i,o}-u_t^i> dt  \nonumber \\
       &= \sum_{i=1}^N \int_{0}^{T}  < {\bf \Pi}_{{\cal H}_{0,t}^{y^i}} \Big( {H}_{u^i}(t,x^o(t),\psi^o(t),u_t^o)\Big), u_t^{i,o}-u_t^i> dt \nonumber \\
       & \leq 0, \hst \forall u \in {\mathbb U}_{reg}^{(N)}[0,T],
       \label{s5a}
  \end{align}
where the last inequality follows from (\ref{eqh35}).    This proves that  $u^o$ optimal and hence  the necessary conditions are also sufficient.
\end{proof}

 Under the conditions  of  Theorem~\ref{theorem5.1}, it can be shown that the  necessary conditions for team optimality and PbP optimality are equivalent. Moreover, under the conditions of Theorem~\ref{theorem5.1s} it can be shown that PbP optimality implies team optimality. We state the results as a corollary.

\begin{corollary}\textbf{(Necessary and sufficient conditions for PbP optimality).} \label{corollarypbpd}
Consider the PbP optimality of  Problem~\ref{problem1} under the conditions of Theorem~\ref{theorem5.1}, \ref{theorem5.1s}.\\ The necessary and sufficient conditions for PbP optimality of $(u^o(\cdot), x^o(\cdot))$ are those of  team optimality given in Theorems~\ref{theorem5.1}, \ref{theorem5.1s}  with the variational inequality (\ref{eq16}) replaced by
\begin{align}    
  \int_0^T \la &{\bf \Pi}_{{\cal H}_{0,t}^{y^i}}\Big( { H}_{u^i}(t,x^o(t),\psi^o(t),u_t^o)\Big), u_t^i- u_t^{i,o} \ra  dt \geq 0,\nonumber \\
&  \hst \hst \hst \hst \forall u^i \in {\mathbb U}_{reg}^{i}[0,T],   \hso \forall i \in {\mathbb Z}_N. \label{eqh35i}
\end{align}
\end{corollary}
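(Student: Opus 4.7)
The plan is to re-run the variational argument of Theorem~\ref{theorem5.1} and the convexity argument of Theorem~\ref{theorem5.1s}, but with perturbations restricted to a single decision maker's strategy at a time, so as to match the structure of the PbP definition \eqref{cfd2d}. The Hamiltonian system $\{H, \psi^o, x^o\}$ built at the full $N$-tuple $u^o$ is unchanged; only the admissible class of variations shrinks.

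For necessity, fix $i \in {\mathbb Z}_N$ and any $u^i \in {\mathbb U}_{reg}^i[0,T]$. Define $u_t^{i,\veps} \tri u_t^{i,o} + \veps(u_t^i - u_t^{i,o})$ and $u_t^{j,\veps} \tri u_t^{j,o}$ for $j \neq i$. PbP optimality gives $\tilde{J}(u^{i,\veps}, u^{-i,o}) \geq \tilde{J}(u^{i,o}, u^{-i,o})$, so the G\^ateaux derivative at $\veps = 0$ is nonnegative. Lemma~\ref{lemma4.1} applies with only the $i$-th term surviving in \eqref{eq9}, and the adjoint manipulation carried out in Theorem~\ref{theorem5.1} then collapses the sum in \eqref{eq16} to a single term, yielding $\int_0^T \la H_{u^i}(t,x^o,\psi^o,u^o), u_t^i - u_t^{i,o}\ra dt \geq 0$. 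The Lebesgue-point/projection argument from step~\textbf{3} of Theorem~\ref{theorem5.1}, applied to this single DM, then delivers \eqref{eqh35i} for that $i$. Repeating this for every $i$ gives all $N$ inequalities. The equivalence with the team necessary conditions is transparent from here: summing \eqref{eqh35i} over $i$ recovers \eqref{eq16}, and specializing \eqref{eq16} to variations $u$ that differ from $u^o$ in a single coordinate recovers \eqref{eqh35i}.

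For sufficiency, assume \textbf{C1}, \textbf{C2} and that \eqref{eqh35i} holds for every $i$. Fix $i$ and $u^i \in {\mathbb U}_{reg}^i[0,T]$, let $x(\cdot)$ be the state driven by $(u^i, u^{-i,o})$, and keep $x^o(\cdot), \psi^o(\cdot)$ associated with $u^o$. Repeating verbatim the computation in the proof of Theorem~\ref{theorem5.1s} (the differential rule applied to $\la \psi^o, x - x^o \ra$ followed by joint convexity of $H$ in $(x,u)$), the only coordinate of the control that differs between the two trajectories is the $i$-th, so the final bound collapses to
\[
\tilde{J}(u^{i,o}, u^{-i,o}) - \tilde{J}(u^i, u^{-i,o}) \leq \int_0^T \la {\bf \Pi}_{{\cal H}_{0,t}^{y^i}}\bigl(H_{u^i}(t,x^o(t),\psi^o(t),u_t^o)\bigr),\, u_t^{i,o} - u_t^i \ra\, dt,
\]
which is $\leq 0$ by \eqref{eqh35i}. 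This holds for every $i$ and every $u^i$, which is exactly \eqref{cfd2d}.

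The one point that requires care is the sufficiency step: the adjoint $\psi^o$ used in the Hamiltonian is always the one built at the full optimal tuple $u^o$, not at the perturbed $(u^i, u^{-i,o})$, and it is the joint convexity of $H$ in $(x,u)$ that lets this single adjoint control the pay-off difference across an arbitrary single-component perturbation of the state. Everything else is a direct transcription of the arguments in Theorems~\ref{theorem5.1} and \ref{theorem5.1s} specialized to one-coordinate variations.
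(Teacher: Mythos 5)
Your proposal is correct and follows essentially the same route as the paper, whose own proof is simply the remark that the argument is ``similar to that of Theorems~\ref{theorem5.1} and \ref{theorem5.1s}'': you specialize the variational and convexity arguments to single-coordinate perturbations $(u^i,u^{-i,o})$, which collapses the sum in \eqref{eq16} to the single term \eqref{eqh35i}, and your closing observations on the equivalence of the team and PbP variational inequalities (via additivity over $i$) and on using the adjoint $\psi^o$ built at the full tuple $u^o$ are exactly the points the paper intends.
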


\begin{proof}
Similar to that of Theorems~\ref{theorem5.1} and \ref{theorem5.1s}.
\end{proof}

We conclude this section by stating that the team  optimality conditions, Pontryagin's maximum principle are  obtained following the  classical theory of deterministic optimal control with centralized strategies. The only variation is the characterization of the optimal strategies described by the projection of the Hamiltonian onto the Hilbert space closed subspace generated by the observables (on which the different DM actions are based on). Consequently, we state following observations.\\
\noi{\bf (O1):} 
By considering spike or needle variations, condition, the derivatives of $f$ and $\ell$ w.r.t. $u$ can be removed and replaced by $f, \ell, \varphi$ that are twice differentiable in $x \in {\mathbb R}^n$, having first partial derivatives which are measurable in $t \in [0, T]$ and continuous with respect to the rest of the arguments, and second partial derivatives which are uniformly bounded. \\
\noi{\bf (O2):} The team and PbP optimality conditions of Theorem~\ref{theorem5.1}, \ref{theorem5.1s} are based on the assumption that ${\mathbb A}^i, i=1, \ldots, N$ are convex. We can consider relaxed strategies, that is, controls which are conditional distributions, $u_t^i(d\xi | \{y^i(s): 0\leq s \leq t\}), i=1, \ldots, N$, and remove the assumptions on the differentiability of $f, \ell$ with respect to $u$, and instead assume ${\mathbb A}^i$, $i=1, \ldots, N$ are compact subsets of finite-dimensional spaces. Based on this relaxed  strategies formulation we can show existence of optimal strategies utilizing appropriate weak$^*$ topologies. Such relaxed strategies are important when the DM actions are based on a finite number of points, such as, ${\mathbb A}^{i} = \{-1, +1\}$ which is not a convex set.  \\
\noi{\bf (O3):} The team and PbP optimality conditions of Theorem~\ref{theorem5.1}, \ref{theorem5.1s} can be generalized to include pointwise and integral constraints, of $x, u$ involving inequalities and equalities. Moreover, the terminal time can be free laying on a manifold, and hence subject to optimization rather than been fixed $T$. Such problems are extensively investigated in the theory of optimal control.
Some of these problems can be transformed into  the team and PbP problems investigated earlier, by augmenting the Hamiltonian, and motifying the boundary conditions.

%
%
%
%
\section{Examples} \label{sec:examples}

In this section, we give examples for two team games with special structures, namely, Generalized Normal Form (GNF) and Linear Quadratic Form (LQF).


\subsection{Generalized Normal Form (GNF)} 
\begin{definition}[\textbf{Generalized Normal Form}]
The game is said to have {``general normal form''} if
\begin{align*}
f(t,x,u) \tri &b(t,x)+ g(t,x)u, \\
 &g(t,x)u \tri \sum_{j=1}^N g^{(j)}(t,x) u^j,  \\
 \ell(t,x,u) \tri &\frac{1}{2}\la u,R(t,x)u\ra + \frac{1}{2} \lambda(t,x)+ \la u, \eta(t,x)\ra , 
 \end{align*}
 \noi where
 \vspace{-0.4cm}
 \begin{align*}
\la u, R(t,x) u \ra \tri & \sum_{i=1}^N \sum_{j=1}^N u^{i,*} R_{ij}(t,x)u^j, \\
\la u,\eta(t,x)\ra \tri & \sum_{i=1}^N u^{i,*}\eta^i(t,x),
\end{align*}
and $R(\cdot,\cdot)$ is symmetric uniformly positive definite, and $\lambda(\cdot,\cdot)$ is uniformly positive semidefinite. 
\end{definition}
{GNF} refers to the case when the drift coefficient $f$ is linear with respect to (w.r.t.) the decision variable $u$, and the pay-off  function $\ell$ is quadratic in $u$, while $f, \ell, \varphi$ are nonlinear with respect to $x$.\\ 

\noi By the definition of Hamiltonian \eqref{h1}, its derivative is given by
\begin{align*}
{\cal H}_u(t,x,\psi,Q,u)  = &g^*(t,x) \psi  + R(t,x)u + \eta(t,x), \nonumber \\  
&\hst \hst \hst \hst (t,x)\in [0,T]\times {\mathbb R}^n.
\end{align*}
By Theorem~\ref{theorem5.1}, utilizing the fact that $u_t^{i,o} \in {\cal H}_{0,t}^{y^i}$ for each $i \in {\mathbb Z}_N$, the explicit expression for $u_t^{i,o}$ is given by
 \begin{align*}
u_t^{i,o}=&- \Big\{ \proj \Big( R_{ii}(t,x^o(t),\psi_x^o(t))  \Big)\Big\}^{-1}\\ 
&\hst \hst \hst \hst \Big\{ \proj \Big( \eta^i(t,x^o(t)) \Big) \\
& + \sum_{j=1,j \neq i}^N \proj \Big( R_{ij}(t,x^o(t))u_t^{j,o}  \Big)   \\
& +  \proj  \Big(  g^{(i),*}(t,x)\psi^{o}(t) \Big) \Big\}, \hso i=1,2, \ldots, N .
\end{align*}

\subsection{Linear Quadratic  Form (LQF)} 
\begin{definition}[\textbf{Quadratic Form}]
The game is said to have ``linear quadratic form'' if
\begin{subequations}
\begin{align}
f(t,x,u)=&A(t)x+b(t) +  B(t)u,  \label{n1} \\
 \ell(t,x)=&\frac{1}{2}\la u,R(t)u\ra  + \frac{1}{2} \la x,H(t)x\ra +\la x,F(t)\ra \nonumber \\
 &+ \la u, E(t)x\ra  +\la u,m(t)\ra ,  \label{n3}\\
  \varphi(x) =& \frac{1}{2} \la x, M(T)x \ra + \la x, N(T)\ra ,  \label{n3a}
\end{align}
\end{subequations}
and   
$R(\cdot)$ is symmetric uniformly positive definite, $H(\cdot)$ is symmetric uniformly positive semidefinite, and $M(T)$ is symmetric positive  semidefinite. 
\end{definition}

\noi From the optimal strategies under {LQF}, one obtains for $i=1,2, \ldots, N$: 
{\small 
 \begin{align*}
&u_t^{i,o}=- \Big\{  R_{ii}(t)\Big\}^{-1} \Big\{ m^i(t) +\sum_{j=1}^N E_{ij}(t) \proj \Big( x^{j,o}(t)\Big)  \\
& + \sum_{j=1,j \neq i}^N  R_{ij}  \proj \Big(u_t^{j,o}\Big) + B^{(i),*}(t)   \proj \Big(  \psi^{o}(t)\Big) \Big\}.
\end{align*}}
Note that the previous equations  can be put in the form of fixed point  matrix equation.

\subsubsection{Team games of Linear Quadratic  Form - Explicit Expressions of Adjoint Processes}

This is a necessary step before one proceeds with the computation of the explicit form of the optimal decentralized strategies, or the computation of them via fixed point methods. For  a game of LQF, let $(x^o(\cdot), \psi^o(\cdot))$ denote the solutions of the Hamiltonian system, corresponding to the optimal control $u^o$, then 
\begin{align}
\frac{d}{dt} x^o(t) &= A(t)x^o(t)+ b(t) + B(t)u_t^o,  \quad x^o(0)=x_0, \label{ex5n} \\
\frac{d}{dt}\psi^o(t) &= -A^*(t)\psi^o(t) - H(t) x^o(t) -F(t) -E^*(t) u_t^o, \nonumber  \\
 & \psi^o(T)=M(T) x^o(T)+ N(T), \label{ex6n}
\end{align}
Next, we find the form of the solution of the adjoint equation \eqref{ex6n}. Let $\{\Phi(t,s): 0\leq s \leq t \leq T\}$ denote the transition operator of $A(\cdot)$ and $\Phi^*(\cdot, \cdot)$ that of the adjoint $A^*(\cdot)$ of $A(\cdot)$. Then we have  the identity $\frac{\partial}{\partial s} \Phi^*(t,s) = -A^*(s) \Phi^*(t,s), 0 \leq s \leq t \leq T$. One can verify by differentiation that the solution $\{ \psi^o(t): t \in [0,T]\}$ of (\ref{ex6n}),  is given by  
\begin{align}
&\psi^o(t)= \Phi^*(T,t)M(T) x^o(T) + N(T) +\nonumber \\
& \int_{t}^T \Phi^*(s,t) \Big\{ H(s) x^o(s) ds +F(s)ds + E^*(s) u_s^o \Big\}  ds  \label{ex9nn}
\end{align}  
Since for any control policy, $\{x^o(s): 0\leq t \leq s \leq T\}$ is uniquely determined from \eqref{ex5n} and its current value $x^o(t)$, then (\ref{ex9nn}) can be expressed via 
\bea
\psi^o(t)=\Sigma(t) x^o(t)+ \beta^o(t) , \hst t \in [0,T], \label{ex15g}
\eea
where $\Sigma(\cdot), \beta^o(\cdot)$ determine the operators to the one expressed via (\ref{ex9nn}). \\
Next, we determine the operators $(\Sigma(\cdot), \beta^o(\cdot))$. Differentiating both sides of (\ref{ex15g}) and using (\ref{ex5n}), (\ref{ex6n}) yields
 \begin{align}
&\dot{\Sigma}(t) +A^*(t) \Sigma(t)   + \Sigma(t) A(t) + H(t)=0, \nonumber \\ &\hst  \hst \hst \hst \Sigma(T)=M(T),  \label{exg17} \\
&\dot{ \beta}^o(t) +A^*(t) \beta^o(t)+ \Sigma(t) b(t) +F(t) +\Sigma(t)  B(t)u_t^o \nonumber \\
& \hst \hst \hst + E^*(t) u_t^o=0, \hst \beta^o(T)=N(T). \label{ex18g}
\end{align}

\noi \textbf{Decentralized Information Structures} \\
Here, we invoke the minimum principle to compute the optimal strategies for team games of LQF.  Without loss of generality we assume  the  distributed  dynamical decision systems consists of  an interconnection of two subsystems,  each governed by a linear  differential equation with coupling. This can be generalized to an arbitrary number of interconnected subsystems. 

\noi Consider the distributed dynamics described below. \\
\noi {\it Subsystem Dynamics 1:} 
\begin{align}
\frac{d}{dt}x^1(t)&= A_{11}(t)x^1(t) + B_{11}(t)u_t^1+ A_{12}(t)x^2(t) \nonumber \\
& + B_{12}(t)u_t^2 , \   x^1(0)=x^1_0, \   t \in (0,T],   \label{ex30} 
\end{align}
\noi {\it Subsystem Dynamics 2:}  
\begin{align}
\frac{d}{dt}x^2(t) &=  A_{22}(t)x^2(t)  + B_{22}(t)u_t^2 + A_{21}(t)x^1(t) \nonumber \\
&+ B_{21}u_t^1 , \ x^2(0)=x^2_0, \  t \in (0,T]. \label{ex31} 
\end{align}

\noi {\bf Pay-off  Functional: }
\begin{align}
J(u^1,u^2) =& \frac{1}{2}  \Big\{ \int_{0}^T \Big[ \la \left( \begin{array}{c} x^{1}(t) \\ x^{2}(t) \end{array} \right), H(t)   \left(\begin{array}{c} x^1(t) \\ x^2(t) \end{array} \right)      \ra\nonumber \\
&+ \la \left( \begin{array}{c} u_t^{1} \\ u_t^{2} \end{array} \right), R(t)   \left(\begin{array}{c} u_t^1 \\ u_t^2 \end{array} \right) \ra         \Big]dt \nonumber \\
& + \la   \left( \begin{array}{c} x^{1}(T) \\ x^{2}(T) \end{array} \right), M(T)   \left(\begin{array}{c} x^1(T) \\ x^2(T) \end{array} \right) \ra            \Big\}.  \label{ex34}
\end{align}

\noi Define the augmented variables by 
\begin{align}
x \tri \left(\begin{array}{c} x^1 \\ x^2 \end{array} \right), \hso u \tri \left(\begin{array}{c} u^1 \\ u^2 \end{array} \right),  \hso \psi \tri \left(\begin{array}{c} \psi^1 \\ \psi^2 \end{array} \right), 
\end{align}
and matrices by
\begin{align}
 A \tri & \left[\begin{array}{cc} A_{11}  & A_{12} \\ A_{21} & A_{22} \end{array} \right], \: &B \tri \left[\begin{array}{cc} B_{11}  & B_{12} \\ B_{21} & B_{22} \end{array} \right], \: \nonumber \\[0.2cm]
  B^{(1)} \tri& \left[ \begin{array}{c} B_{11}  \\ B_{21} \end{array} \right], \: &B^{(2)} \tri \left[ \begin{array}{c} B_{12} \\ B_{22} \end{array} \right]. \nonumber \\ \nonumber
\end{align}

Let $\big(x^o(\cdot), \psi^o(\cdot)\big)$ denote the solutions of the Hamiltonian system, corresponding to the optimal control $u^o$, then 
\begin{subequations}
\begin{align}
\frac{d}{dt}x^o(t) =& A(t)x^o(t) + B(t)u_t^o , \hst x^o(0)=x_0, \label{nex5} \\
\frac{d}{dt}\psi^o(t)=& -A^*(t)\psi^o(t)   - H(t) x^o(t), \nonumber \\ 
&\psi^o(T)=M(T) x^o(T), \label{nex6} \\[0.1cm]
\psi^o(t)=& \Sigma(t) x^o(t) + \beta^o(t), \label{mv2}
\end{align}
\end{subequations}
where $\Sigma(\cdot), \beta^o(\cdot)$ are given by (\ref{exg17}), (\ref{ex18g}) with $b, F, E=0$.  
The optimal decisions $\{(u_t^{1,o}, u_t^{2,o}): 0 \leq t\leq T\}$ are given by 
\begin{subequations}
\begin{align}
\proj \Big( {\cal H}_{u^1} (t,x^{1,o}(t), & x^{2,o}(t), \psi^{1,o}(t), \psi^{2,o}(t), u_t^{1,o}, \nonumber \\ 
& u_t^{2,0})\Big) = 0,  \ t \in [0,T]. \label{ex35}
\end{align}
\begin{align}
\proj  \Big(  {\cal H}_{u^2} (t,x^{1,o}(t),& x^{2,o}(t), \psi^{1,o}(t),\psi^{2,o}(t), u_t^{1,o}, \nonumber \\ 
&u_t^{2,0}) \Big) = 0, \ t \in [0,T]. \label{ex36}
\end{align}
\end{subequations}
From (\ref{ex35}), (\ref{ex36}) the optimal decisions for $\ t \in [0,T]$ are given by
\begin{subequations}
\begin{align}
u_t^{1,o} =& -R_{11}^{-1}(t)   B^{(1),*}(t)  {\bf \Pi}_{{\cal H}_{0,t}^{y^1}} \Big(\psi^{o}(t)\Big) \nonumber \\ 
&\hst -R_{11}^{-1}(t)R_{12}(t) {\bf \Pi}_{{\cal H}_{0,t}^{y^1}} \Big(u_t^{2,o} \Big) ,  \label{ex41}
\end{align}
\begin{align}
u_t^{2,o} = &-R_{22}^{-1}(t) B^{(2),*}(t) {\bf \Pi}_{{\cal H}_{0,t}^{y^2}} \Big(\psi^{o}(t)\Big) \nonumber \\ 
&\hst -R_{22}^{-1}(t)R_{21}(t) {\bf \Pi}_{{\cal H}_{0,t}^{y^2}}\Big(u_t^{1,o} \Big). \label{ex42}
\end{align}
\end{subequations}
One can proceed further to utilize the solution for $\psi^o(\cdot)$ to express the projections in (\ref{ex41}), (\ref{ex42}) into projections of the state $x^o(\cdot)$ onto the subspaces ${\cal H}_{0,t}^{y^i}, i=1, 2$, and then find the equations governing these projections.  This procedure is lenghty and hence it is omitted. 

%
%
%
%
\section{Discrete-time dynamical systems}\label{sec:discrete}

By either discretizing the continuous-time system (or considering the discrete-time analog), we have the Hamiltonian at each time step $k$ given by
\bes
{\cal H} (k,x,\psi,u) \tri    \langle f(k,x,u),\TC{\psi(k+1)} \rangle  + \ell(k,x,u), 
\ees
where $k\in \mathbb{Z}_{T-1}^0$ and $T$ is a positive integer. Note that the adjoint is one step ahead of the other terms. In terms of the Hamiltonian, the state process satisfies the differential equation
\begin{subequations}
\begin{align}
{x}(&k+1) = f(k,x(k),u_k) \nonumber \\ 
&= {\cal H}_\psi (k,x(k),\TC{\psi(k+1)},u_k),   \ k\in \mathbb{Z}_{T-1}^0 \label{st1id}\\
        x(0) &=  x_0. \label{st1jad}
\end{align}
\end{subequations}
For any $u \in {\mathbb U}_{reg}^{(N)}[0,T]$, the adjoint process is $\psi \in  \ell^2([0,T], {\mathbb R}^n)$ satisfies the following backward differential equation
\begin{subequations}
\bea
{\psi} (k) = - { H}_x (k,x(k),\TC{\psi(k+1)},u_k),  \ k\in \mathbb{Z}_{T-1}^0. \label{adj1ad} \\
\hst \psi(T) =   \varphi_x(x(T)). \label{eq18d}
 \eea
 \end{subequations}
The process ${\psi}^o$  is the  unique  solution of the backward difference equation (\ref{adj1ad}), (\ref{eq18d}) and   $u^o \in {\mathbb U}_{reg}^{(N)}[0,T]$ satisfies  the  inequalities:
\begin{align}
   \la {\bf \Pi}_{{\cal H}_{0,k}^{y^i}}& \Big( H_{u^i}(k,x^o(k),\psi^o(k+1),u_k^{o})\Big) , v^i-u_k^{i,o}\ra   \geq 0, \nonumber \\
   &\hst \forall v^i \in {\mathbb A}^i,  \ k \in [0,T], i=1,2,\ldots, N.   \label{eqh35ii}
\end{align}

\section{Conclusions and Future Work} \label{sec:conclusions}

In this paper we have considered team games for distributed decision systems, with decentralized information patterns for each DM. Necessary and sufficient optimality conditions with respect to  team optimality and PbP optimality criteria are derived, based on Pontryagin's maximum principle. 
The methodology is very general, and applicable to many areas. However, several additional issues remain to be investigated. Below, we provide a short list. \\
\noi \textbf{(F1)} The derivation of optimality conditions can be used in other type of games such as Nash-equilibrium games with decentralized information structures for each DM, and minimax games of robust control. \\
\noi \textbf{(F2)} The methodology can be extended to deal with exogenous inputs in the state dynamics and the  measurements, by assuming these belong to $L^2$ or $\ell^2$ spaces. For distributed systems of control of linear quadratic form, with decentralized information structures, one may also invoke the  minimax formulation found  in \cite{hassibi} which invokes Krein spaces, instead of Hilbert spaces. 

\bibliographystyle{IEEEtran}
\bibliography{bibdata}

\begin{thebibliography}{10}
\providecommand{\url}[1]{#1}
\csname url@samestyle\endcsname
\providecommand{\newblock}{\relax}
\providecommand{\bibinfo}[2]{#2}
\providecommand{\BIBentrySTDinterwordspacing}{\spaceskip=0pt\relax}
\providecommand{\BIBentryALTinterwordstretchfactor}{4}
\providecommand{\BIBentryALTinterwordspacing}{\spaceskip=\fontdimen2\font plus
\BIBentryALTinterwordstretchfactor\fontdimen3\font minus
  \fontdimen4\font\relax}
\providecommand{\BIBforeignlanguage}[2]{{%
\expandafter\ifx\csname l@#1\endcsname\relax
\typeout{** WARNING: IEEEtran.bst: No hyphenation pattern has been}%
\typeout{** loaded for the language `#1'. Using the pattern for}%
\typeout{** the default language instead.}%
\else
\language=\csname l@#1\endcsname
\fi
#2}}
\providecommand{\BIBdecl}{\relax}
\BIBdecl

\bibitem{bamieh-voulgaris2005}
B.~Bamieh and P.~Voulgaris, ``A convex characterization of distributed control
  problems in spatially invariant systems with communication constraints,''
  \emph{Systems and Control Letters}, vol.~54, no.~6, pp. 575--583, 2005.

\bibitem{nayyar-mahajan-teneketzis2011}
A.~Nayyar, A.~Mahajan, and D.~Teneketzis, ``Optimal control strategies in
  delayed sharing information structures,'' \emph{IEEE Transactions on
  Automatic Control}, vol.~56, no.~7, pp. 1606--1620, 2011.

\bibitem{vanschuppen2011}
J.~H. van Schuppen, ``Control of distributed stochastic systems-introduction,
  problems, and approaches,'' in \emph{International Proceedings of the IFAC
  World Congress}, 2011.

\bibitem{lessard-lall2011}
L.~Lessard and S.~Lall, ``A state-space solution to the two-player optimal
  control problems,'' in \emph{Proceedings of 49th Annual Allerton Conference
  on Communication, Control and Computing}, 2011.

\bibitem{mahajan-martins-rotkowitz-yuksel2012}
A.~Mahajan, N.~Martins, M.~Rotkowitz, and S.~Yuksel, ``Information structures
  in optimal decentralized control,'' in \emph{In Proceedings of the 51st
  Conference on Decision and Control (CDC)}, 2012.

\bibitem{charalambous-ahmedFIS_Parti2012}
\BIBentryALTinterwordspacing
C.~D. Charalambous and N.~U. Ahmed, ``Centralized versus decentralized team
  games of distributed stochastic differential decision systems with noiseless
  information structures-{P}art {I}: Applications,'' \emph{Submitted to IEEE
  Transactions on Automatic Control}, pp. 1--39, February 2013. [Online].
  Available: \url{http://arxiv.org/abs/1302.3452}
\BIBentrySTDinterwordspacing

\bibitem{charalambous-ahmedFIS_Partii2012}
\BIBentryALTinterwordspacing
------, ``Centralized versus decentralized team games of distributed stochastic
  differential decision systems with noiseless information structures-{P}art
  {II}: Applications,'' \emph{Submitted to IEEE Transactions on Automatic
  Control}, pp. 1--39, February 2013. [Online]. Available:
  \url{http://arxiv.org/abs/1302.3416}
\BIBentrySTDinterwordspacing

\bibitem{marschak1955}
J.~Marschak, ``Elements for a theory of teams,'' \emph{Management Science},
  vol.~1, no.~2, 1955.

\bibitem{radner1962}
R.~Radner, ``Team decision problems,'' \emph{The Annals of Mathematical
  Statistics}, vol.~33, no.~3, pp. 857--881, 1962.

\bibitem{waal-vanschuppen2000}
P.~R. Wall and J.~H. van Schuppen, ``A class of team problems with discrete
  action spaces: Optimality conditions based on multimodularity,'' \emph{SIAM
  Journal on Control and Optimization}, vol.~38, no.~3, pp. 875--892, 2000.

\bibitem{hassibi}
H.~Babak, S.~A. H., and K.~Thomas, \emph{Indefinite-Quadratic Estimation and
  Control: A Unified Approach to $\mathcal{H}_2$ and $\mathcal{H}_\infty$
  Theories}.\hskip 1em plus 0.5em minus 0.4em\relax Society for Industrial and
  Applied Mathematics, 1999.

\end{thebibliography}
\end{document}